\documentclass[12pt]{amsart}
\usepackage[utf8]{inputenc}
\usepackage{amsfonts}
\usepackage{amsmath}
\usepackage{amsthm}
\usepackage{amssymb}
\usepackage{latexsym}
\usepackage{enumerate}
\usepackage{centernot}

\textwidth = 440pt
\textheight = 630pt
\hoffset = -50pt
\voffset = -35pt

\newtheorem{theorem}{Theorem}
\newtheorem{lemma}[theorem]{Lemma}
\newtheorem{proposition}[theorem]{Proposition}
\newtheorem{corollary}[theorem]{Corollary}

\newtheorem{question}[theorem]{Question}
\newtheorem{example}[theorem]{Example}
\newtheorem{definition}[theorem]{Definition}

\newcommand{\norm}[1]{\Vert#1\Vert}
\newcommand{\F}{\mathcal{F}}
\newcommand{\G}{\mathcal{G}}
\newcommand{\GM}{\mathcal{G}^{MAX}}
\newcommand{\Sh}{\mathcal{S}}
\newcommand{\FM}{\mathcal{F}^{MAX}}
\newcommand{\SM}{\mathcal{S}^{MAX}}
\newcommand{\N}{\mathbb{N}}
\newcommand{\R}{\mathbb{R}}
\newcommand{\K}{\mathbb{K}}
\newcommand{\C}{\mathbb{C}}

\title{Isometries of combinatorial Banach spaces}
\author{C. Brech}
\address{Departamento de Matem\'atica, Instituto de Matem\'atica e Estat\'\i stica, Universidade de S\~ao Paulo, Rua do Mat\~ao, 1010 - CEP 05508-090 - S\~ao Paulo - SP - Brazil}
\email{brech@ime.usp.br}
\author{V. Ferenczi}
\address{Departamento de Matem\'atica, Instituto de Matem\'atica e Estat\'\i stica, Universidade de S\~ao Paulo, Rua do Mat\~ao, 1010 - CEP 05508-090 - S\~ao Paulo - SP - Brazil}
\email{ferenczi@ime.usp.br}
\author{A. Tcaciuc}
\address{Department of Mathematics and Statistics, MacEwan University, 10700-104 Avenue Edmonton, Alberta, T5J 4S2, Canada  }
\email{tcaciuca@macewan.ca}
\thanks{The first author was supported by CNPq grant (308183/2018-5), and the second author by CNPq grant (303034/2015-7). The first and second authors were supported by FAPESP grant (2016/25574-8). The third author was supported by MacEwan Project grant (01891).}

\subjclass[2010]{46B04, 46B45, 03E05, 03E75}

\keywords{regular families, combinatorial spaces, isometry group, Schreier families}

\date{\today}

\begin{document}
\baselineskip 18pt
\maketitle

\begin{abstract}
We prove that every isometry between two combinatorial spaces is determined by a permutation of the canonical unit basis combined with a change of signs. As a consequence, we show that in the case of Schreier spaces, all the isometries are given by a change of signs of the elements of the basis. Our results hold for both the real and the complex cases.
\end{abstract}

\section{introduction}

Classical results guarantee that every isometry of the spaces $c_0$ or $\ell_p$, $1 \leq p < \infty$, $p\neq 2$, are determined by a permutation of the elements of the canonical unit basis and a change of signs of these vectors (see e.g. \cite[Theorem 9.8.3 and Theorem 9.8.5]{ro}). Recently, it has been shown by Antunes, Beanland and Viet Chu \cite{abh} that the real Schreier spaces of finite order have a more rigid structure: isometries of these spaces correspond to a change of signs of the elements of the canonical unit basis. In this paper we generalize these results to higher order Schreier spaces and more general combinatorial spaces, in both the real and the complex cases, answering a question posed by K. Beanland in a private conversation. We also characterize the isometries that may arise between different combinatorial spaces.

In what follows, we consider spaces with either real or complex scalars. In this context, we call a scalar of modulus $1$ a {\em sign} (so simply $\pm 1$ in the real case). 
Recall that for a given regular family $\F$ (i.e. hereditary, compact and spreading, see Definition \ref{regular}) of finite subsets of $\N$, the combinatorial Banach space $X_\F$ is the completion of $c_{00}$, the vector space of finitely supported scalar sequences, with respect to the norm:
$$
\norm{x}=\sup\left\{\sum_{i\in F}|x(i)|: F\in\F\right\}.
$$
The sequence of unit vectors $(e_n)_n$ forms an unconditional Schauder basis, and $X_\F$ is $c_0$-saturated, see \cite{fonf}, so in particular it contains no copies of $\ell_1$. Therefore the basis $(e_n)_n$ is shrinking (see Theorem 1.c.9 in \cite{lt}), hence $(e_n^*)_n$ is a Schauder basis of the dual space $X_\F^*$. 

The most simple examples of regular families are the families $[\N]^{\leq n}$ of all subsets of $\N$ of cardinality at most $n$ for some fixed $n \in \N$. More interesting examples are the Schreier family
$$\mathcal{S}:= \{F \in [\N]^{< \omega}: |F| \leq \min F\} \cup \{ \emptyset\}$$
and its versions of higher order, which will be considered in Section \ref{secSchreier}. 

Given two combinatorial spaces $X_\F$ and $X_\G$ and a surjective isometry $T: X_\F^* \rightarrow X_\G^*$ between the corresponding dual spaces, we can use the classical fact that extreme points of the unit balls are preserved by $T$ to analyze the expansion of each $Te_i^* = \sum_j \alpha^i_j e_j^*$. This is the analysis we make in Section \ref{secIsom} to prove our main result (Theorem \ref{perm}), which states that if $\F$ and $\G$ are regular families, then $T$ is induced by what we call a {\em signed permutation}, i.e. for every $i \in \N$, $Te_i^* = \theta_i e^*_{\pi(i)}$ for some permutation $\pi: \N \rightarrow \N$ and some sequence of signs $(\theta_i)_i$. Since the adjoint operator of an isometry is an isometry, it follows in particular that any isometry $T: X_\F \rightarrow X_\G$ is also induced by a signed permutation. 

Together with the fact that surjective isometries between Banach spaces preserve extreme points of the unit balls,  we are going to use extensively in our arguments the following description of the extreme points of the dual ball of a combinatorial space:
$$
Ext(X_{\F}^{*})=\left\{\sum_{i\in F}\theta_i e^*_i : F\in\FM \text{ and }(\theta_i)_{i \in F}\text{ is a sequence of signs} \right\},
$$
where $\FM$ denotes the family of maximal elements of $\F$ with respect to inclusion. Since this characterization of extreme points holds in the real case (see \cite{abh, gow}) but was not known in the complex case, we shall give a proof for complex combinatorial spaces, see Proposition \ref{extcharact}. Our proof includes the real case and seems to be simpler than the original proof. 


Classical examples of combinatorial spaces are the spaces $X_{\Sh_\alpha}$ associated to the so called generalized Schreier families $S_\alpha$, for $\alpha < \omega_1$. As a consequence of our main result and specific properties of these families, we prove in Section \ref{secSchreier} that any isometry $T$ of $X_{\Sh_\alpha}^*$ acts on the canonical unit basis as a change of signs, that is, $Te_i^* = \theta_i e_i^*$ for some sequence of signs $(\theta_i)_i$.



\subsection*{Acknowledgement} We would like to thank Jordi Lopez-Abad for helpful conversations and feedback, and in particular for the ideas leading to the proof of Proposition \ref{extcharact}.

\section{Preliminaries}

We start with the combinatorial background for our results.

\subsection{Regular families}

Let $[\mathbb{N}]^{< \omega}$ denote the family of all finite subsets of $\mathbb{N}$ and by a family we mean always a family of finite subsets of $\mathbb{N}$ which contains all singletons. We denote by $\FM$ the family of maximal elements of a family $\mathcal{F}$ with respect to inclusion. 

\begin{definition} \label{regular}
We say that a given family $\mathcal{F}$ is regular if it satisfies the following three conditions:
\begin{itemize}
     \item $\mathcal{F}$ is hereditary (closed under subsets);
    \item $\mathcal{F}$ is compact as a subset of $2^\mathbb{N}$, where each element of $\mathcal{F}$ is identified  with its characteristic function;
    \item $\mathcal{F}$ is spreading, that is, if $F \in \mathcal{F}$ and $\sigma:F \rightarrow \mathbb{N}$ is such that $\sigma(n) \geq n$ for every $n \in F$, then $\sigma(F) \in \mathcal{F}$.
\end{itemize}
\end{definition}

An easy property shared by regular families and which will be frequently used is the fact that any element can be extended ``to the right'' to a maximal one:

\begin{lemma}\label{propRegular}
If $\F$ is a regular family and $F \in \F$, then for every infinite set $N \subseteq \N$, there is $F \subseteq E \in \FM$ such that $E \setminus F \subseteq N$.
\end{lemma}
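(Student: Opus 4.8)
The plan is to construct $E$ greedily, repeatedly adjoining to $F$ elements of $N$ lying to the right of everything chosen so far, and to argue that this process both stays inside $\F$ and must terminate at a maximal set. Concretely, I would build a strictly increasing chain $F = G_0 \subsetneq G_1 \subsetneq \cdots$ inside $\F$ in which each $G_{k+1} = G_k \cup \{n_{k+1}\}$ with $n_{k+1} \in N$ and $n_{k+1} > \max G_k$. As soon as some stage $G_k$ is maximal in $\F$, I stop and set $E = G_k$; by construction $F \subseteq E$ and $E \setminus F \subseteq N$, exactly as required.

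The crux is the following extension step, which is where the spreading property does the work: if $G \in \F$ is not maximal, then $G \cup \{n\} \in \F$ for some $n \in N$ with $n > \max G$. Since $\F$ is hereditary, non-maximality of $G$ means $G \cup \{j\} \in \F$ for some $j \notin G$, but a priori $j$ could be small. To push this to a large element of $N$, I would write $G = \{g_1 < \cdots < g_k\}$ and $G \cup \{j\} = \{h_1 < \cdots < h_{k+1}\}$, pick any $n \in N$ with $n \geq \max(G \cup \{j\})$, and define $\sigma$ on $G \cup \{j\}$ by $\sigma(h_m) = g_m$ for $m \leq k$ and $\sigma(h_{k+1}) = n$. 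The key inequality is $h_m \leq g_m$ for $m \leq k$ (the $m$-th smallest element of $G$ is at least the $m$-th smallest element of the superset $G \cup \{j\}$), which gives $\sigma(h_m) \geq h_m$; together with $\sigma(h_{k+1}) = n \geq h_{k+1}$ this verifies the hypothesis of spreading, so $\sigma(G \cup \{j\}) = G \cup \{n\} \in \F$. I expect this relabeling argument, and in particular getting the index inequality right, to be the main (though minor) obstacle.

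Finally, to see that the process terminates I would invoke compactness. If it never stopped, the chain $(G_k)_k$ would be infinite and strictly increasing, so its union $G_\infty = \bigcup_k G_k$ would be an infinite subset of $\N$. The characteristic functions $\chi_{G_k}$ converge to $\chi_{G_\infty}$ in $2^{\N}$, and since $\F$ is compact, hence closed, we would obtain $G_\infty \in \F$, contradicting the fact that every member of $\F$ is finite. Therefore the chain stabilizes at a maximal element $E$, which completes the proof.
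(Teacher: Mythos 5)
Your proof is correct and takes essentially the same route as the paper's: both iterate a step in which a non-maximal set in $\F$ is properly extended and the new element is spread far to the right into $N$ (your explicit relabeling $\sigma(h_m)=g_m$ is a valid instance of the spreading property), with compactness of $\F$ ruling out an infinite strictly increasing chain. One tiny fix: in the extension step take $n \in N$ \emph{strictly} greater than $\max(G \cup \{j\})$ (possible since $N$ is infinite), because when $j < \max G$ your condition $n \geq \max(G \cup \{j\})$ permits $n = \max G$, in which case $\sigma$ fails to be injective and $G \cup \{n\} = G$ gives no progress.
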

\begin{proof}
Given $F \in \F \setminus \FM$ and $N \subseteq \N$ infinite let $F \subsetneq E_1 \in \F$ and spread $E_1$ to some $F_1 $ in such that a way that $F \subsetneq F_1 \in \F$ and $F_1\setminus F \subseteq N$. If $F_1 \in \FM$ we are done. If not, let $F_1 \subsetneq E_2 \in \F$ and spread $E_2$ to some $F_2 $ in such that a way that $F_1
 \subsetneq F_2 \in \F$ and $F_2\setminus F_1 \subseteq N$, so that $F_2\setminus F \subseteq N$. Repeat this process until getting some $F \subseteq F_n \in \FM$ such that $F_n \setminus F \subseteq N$. This will necessarily happen, as if not, $(F_n)_n$ will be a strictly increasing chain of elements of $\F$ converging to the infinite set $Y = \bigcup_{n \in \N} F_n \notin \F$, contradicting the compactness of $\F$.
\end{proof}

\begin{lemma} \label{spreading}
Suppose $\F$ is a regular family, and let $n\in\N$ with $\{n\}\notin\FM$. Then we can find a sequence of finite sets $n<G_1<G_2<...$ such that for any $i\in\N$, $|G_i|\leq|G_{i+1}|$ and $G_i\cup \{n\}\in \FM$.
\end{lemma}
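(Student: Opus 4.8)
The plan is to build the sequence $(G_i)_i$ by induction, at each stage using the spreading property to shift the previously constructed block far to the right while keeping $n$ fixed, and then invoking Lemma \ref{propRegular} to re-extend to a maximal set without disturbing $n$.

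For the base case, I would apply Lemma \ref{propRegular} to $F=\{n\}\in\F$ and the infinite set $N=\{m\in\N: m>n\}$ to obtain $\{n\}\subseteq E\in\FM$ with $E\setminus\{n\}\subseteq N$. Since $\{n\}\notin\FM$, this extension is proper, so $G_1:=E\setminus\{n\}$ is a nonempty finite set with $n<G_1$ and $G_1\cup\{n\}=E\in\FM$.

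For the inductive step, suppose $G_i$ has been constructed with $n<G_i$ and $G_i\cup\{n\}\in\FM$. Writing $G_i=\{a_1<\cdots<a_k\}$, I would pick integers $\max G_i<b_1<\cdots<b_k$ and define $\sigma$ on $G_i\cup\{n\}$ by $\sigma(n)=n$ and $\sigma(a_j)=b_j$. Since $b_j>\max G_i\ge a_j$ and $\sigma(n)=n$, the map $\sigma$ satisfies the hypothesis of the spreading property, so $H_i:=\{b_1,\dots,b_k\}$ satisfies $\{n\}\cup H_i\in\F$, with $G_i<H_i$ and $|H_i|=|G_i|$. As $\{n\}\cup H_i$ need not be maximal, I would apply Lemma \ref{propRegular} again, now to $\{n\}\cup H_i$ and $N=\{m\in\N: m>\max H_i\}$, obtaining a maximal $E'\in\FM$ containing $\{n\}\cup H_i$ whose new elements all lie beyond $\max H_i$. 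Setting $G_{i+1}:=E'\setminus\{n\}$ then yields $G_i<G_{i+1}$, $G_{i+1}\cup\{n\}=E'\in\FM$, and $|G_{i+1}|\ge|H_i|=|G_i|$, as required, while $n<G_{i+1}$ persists since $\min G_{i+1}=\min H_i>\max G_i>n$.

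The substance of the argument is the interplay between the two tools: spreading moves a maximal configuration rigidly to the right while preserving cardinality and the fixed coordinate $n$, but typically destroys maximality by opening gaps, whereas re-extending via Lemma \ref{propRegular} restores maximality to the right and can only enlarge the block, which is exactly what produces the monotonicity $|G_i|\le|G_{i+1}|$. The main point to watch is that every extension is taken strictly to the right of the current support, so the blocks stay successive and $n$ is never swallowed; the condition $n<G_i$ propagates through the induction precisely because both the spreading and the re-extension act only on coordinates exceeding $\max G_i>n$.
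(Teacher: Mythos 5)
Your proposal is correct and follows essentially the same route as the paper's proof: extend $\{n\}$ to a maximal set via Lemma \ref{propRegular}, then iterate by spreading the block $G_i$ to the right while fixing $n$, and re-extending to a maximal set with Lemma \ref{propRegular}, which can only enlarge the block and thus gives $|G_i|\leq|G_{i+1}|$. You simply spell out the details (the explicit map $\sigma$ and the choice of the infinite set $N$) more carefully than the paper does.
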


\begin{proof}
By Lemma \ref{propRegular}, we can find $F_1\in\FM$ such that $n=\min F_1$. Clearly $|F_1|\geq 2$, and let $G_1:=F_1\setminus\{n\}$. Using that $\F$ is spreading, we can find $F'_2\in\F$ such that $|F_1|=|F'_2|$,  $n=\min F'_2$, and  $G_1<F'_2\setminus\{n\}$. Next, from Lemma \ref{propRegular}, it follows that we can ``fill in" $F'_2$ to the right, if necessary, to obtain a set $F_2\in\FM$. Let $G_2:=F_2\setminus\{n\}$, and clearly  $|G_1|\leq|G_2|$. Continuing in this manner we obtain the conclusion of the lemma.
\end{proof}

\subsection{Extreme points in the dual space}

Denote by $\K$ the field of scalars $\R$ or $\C$. We say that a subset $N$ of a Banach space $X$ is {\em sign invariant} if for any sign $\theta\in\K$, we have $\theta N=N$.
We recall a very classical lemma in its real/complex version.

\begin{lemma} \label{norming}

Let $X$ be a Banach space over $\K$, and let $N\subseteq B_{X^*}$ be a sign invariant norming set for $X$. Then
$$
B_{X^*}=\overline{conv(N)}^{w*}.
$$

\end{lemma}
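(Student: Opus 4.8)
The plan is to prove the nontrivial inclusion $B_{X^*}\subseteq \overline{conv(N)}^{w*}$ by a Hahn--Banach separation argument in the weak* topology, the reverse inclusion being immediate since $N\subseteq B_{X^*}$ and $B_{X^*}$ is convex and weak*-closed (indeed weak*-compact by Banach--Alaoglu).

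First I would set $K:=\overline{conv(N)}^{w*}$ and suppose, for contradiction, that some $g\in B_{X^*}$ does not belong to $K$. Since $K$ is a weak*-closed convex set and $\{g\}$ is a weak*-compact convex set disjoint from $K$, I would apply the separation theorem in the locally convex space $(X^*,w^*)$, whose topological dual is canonically $X$. This yields a vector $x\in X$ and a real number $\alpha$ such that $\mathrm{Re}\,f(x)\le \alpha < \mathrm{Re}\,g(x)$ for every $f\in K$; in particular $\mathrm{Re}\,f(x)\le\alpha$ holds for every $f\in N$.

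The key step is to use the hypotheses on $N$ to compute $\sup_{f\in N}\mathrm{Re}\,f(x)$. Because $N$ is sign invariant, for each $f\in N$ with $f(x)\neq 0$ the functional $\theta f$ with $\theta=\overline{f(x)}/|f(x)|$ again lies in $N$ and satisfies $\mathrm{Re}\,(\theta f)(x)=|f(x)|$; hence $\sup_{f\in N}\mathrm{Re}\,f(x)=\sup_{f\in N}|f(x)|$. Because $N$ is norming, the latter supremum equals $\norm{x}$. Combining with the separation inequality gives $\norm{x}=\sup_{f\in N}\mathrm{Re}\,f(x)\le\alpha$. On the other hand $g\in B_{X^*}$ forces $\mathrm{Re}\,g(x)\le|g(x)|\le\norm{x}\le\alpha$, contradicting $\alpha<\mathrm{Re}\,g(x)$. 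This contradiction establishes $B_{X^*}\subseteq K$ and hence the lemma.

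I expect the only delicate point to be the treatment of the complex case: the separation theorem produces a real-linear continuous functional, and one must pass from it to an element of $X$ and then exploit sign invariance to convert $\mathrm{Re}\,f(x)$ into $|f(x)|$. Once this identification is handled carefully, the real and complex cases run in parallel.
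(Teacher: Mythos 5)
Your proposal is correct and follows essentially the same route as the paper's proof: Hahn--Banach separation of the weak*-compact convex hull from the stray functional in $(X^*,w^*)$, then sign invariance to convert $\mathrm{Re}\,f(x)$ into $|f(x)|$, and the norming hypothesis to identify $\sup_{f\in N}|f(x)|$ with $\norm{x}$, yielding the same contradiction. The only cosmetic difference is that the paper applies sign invariance to the whole hull $\overline{conv(N)}^{w*}$ while you apply it directly to $N$, which changes nothing.
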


\begin{proof}
Denote $S:=\overline{conv(N)}^{w*}$ and note that $S$ is sign invariant. Assume by contradiction that the conclusion is false and pick $f\in B_{X^*}\setminus S$. $S$ is convex, $w^*$-compact and disjoint from $\{f\}$ (which is convex and $w^*$-closed). From Hahn-Banach separation theorem we have that there exists $x\in X$ 
and a real number $t$ such that   
$$
  Re(g(x))<t<Re(f(x)), \mbox{ for all } g\in S.
$$
Multiplying a given $g \in S$ by the appropriate sign we may assume that 
 $Re( g(x))=|g(x)|$. Since $S$ is sign invariant, we have 
$$
 |g(x)|<t<Re(f(x)), \mbox{ for all } g\in S.
$$
Taking the supremum over all $g\in N$, we obtain the contradiction 
$$
\norm{x}<Re(f(x))\leq|f(x)|\leq \norm{f}\norm{x}\leq\norm{x},
$$
which finishes the proof. 
\end{proof}




\begin{proposition}\label{extcharact}
If $\F$ is a regular family, then 
$$
Ext(X_{\F}^{*})=\left\{\sum_{i\in F}\theta_i e^*_i : F\in\FM \text{ and }(\theta_i)_{i \in F}\text{ is a sequence of signs} \right\},
$$
\end{proposition}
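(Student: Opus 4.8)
The plan is to realize $B_{X_\F^*}$ as the weak$^*$-closed convex hull of the natural norming set and then prove the two inclusions separately. Put
$$N=\left\{\sum_{i\in F}\theta_i e_i^* : F\in\F \text{ and }(\theta_i)_{i\in F}\text{ is a sequence of signs}\right\}.$$
One checks immediately that $N\subseteq B_{X_\F^*}$ is sign invariant, and that it is norming: for a fixed $x$ and $F\in\F$, choosing each $\theta_i$ to be the conjugate of the sign of $x(i)$ gives $\sum_{i\in F}|x(i)|=g_F(x)$ for a suitable $g_F\in N$, so $\sup_{g\in N}|g(x)|=\norm{x}$ by the very definition of the combinatorial norm. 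Lemma \ref{norming} then yields $B_{X_\F^*}=\overline{conv(N)}^{w*}$, and by Milman's partial converse to the Krein--Milman theorem every extreme point lies in $\overline{N}^{w*}$.

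For the inclusion of $Ext(X_\F^*)$ into the set on the right I would first compute $\overline{N}^{w*}$. Given a net $g_\alpha=\sum_{i\in F_\alpha}\theta_i^\alpha e_i^*\to g$ in the weak$^*$ topology, I pass to a subnet along which $F_\alpha\to F$ in $2^\N$; here the compactness of $\F$ is exactly what guarantees that the limit $F$ is again a \emph{finite} member of $\F$ rather than an infinite set. Passing to a further subnet so that the finitely many signs $(\theta_i^\alpha)_{i\in F}$ converge and then evaluating at each $e_j$, I obtain $g(e_j)=\theta_j$ for $j\in F$ and $g(e_j)=0$ for $j\notin F$; since $c_{00}$ is dense this forces $g=\sum_{i\in F}\theta_i e_i^*$. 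Hence $\overline{N}^{w*}=N\cup\{0\}$. It remains to discard the non-extreme members: $0$ is not extreme, and if $F\in\F\setminus\FM$ then choosing $j$ with $F\cup\{j\}\in\F$ gives $\sum_{i\in F}\theta_i e_i^*=\tfrac12\big((g+e_j^*)+(g-e_j^*)\big)$, a nontrivial midpoint of two elements of $N\subseteq B_{X_\F^*}$. This leaves only the functionals with $F\in\FM$.

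For the reverse inclusion I would show directly that each $g=\sum_{i\in F}\theta_i e_i^*$ with $F\in\FM$ is extreme. Assume $g=\tfrac12(h_1+h_2)$ with $h_1,h_2\in B_{X_\F^*}$ and write $h_k=\sum_j\beta_j^k e_j^*$. Testing against the norm-one vector $\bar\theta_i e_i$ (norm one because $\{i\}\in\F$) gives $h_1(\bar\theta_i e_i)+h_2(\bar\theta_i e_i)=2$ with each summand of modulus at most $1$, which forces $\beta_i^k=\theta_i$ for every $i\in F$. To rule out extra coordinates, suppose $\beta_{j_0}^k\neq0$ for some $j_0\notin F$ and test against $x_t=\sum_{i\in F}\bar\theta_i e_i+t c\,e_{j_0}$, where $c$ is the sign aligning $\beta_{j_0}^k$ to the positive real axis and $0<t\le1$. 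Because $F$ is maximal, $F\cup\{j_0\}\notin\F$, so no $G\in\F$ containing $j_0$ can contain all of $F$; this makes $\norm{x_t}=|F|$ for $t\le1$, while $h_k(x_t)=|F|+t|\beta_{j_0}^k|>|F|$, contradicting $\norm{h_k}\le1$. Hence $h_k=g$ and $g$ is extreme.

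The main obstacle is this last step: the whole point is that maximality of $F$ pins down the \emph{unique} subset of $\F$ on which the test vector attains its norm, and it is exactly this rigidity that prevents the $h_k$ from carrying any mass outside $F$. The computation of $\overline{N}^{w*}$ is the other place where a hypothesis (compactness) does essential work, ensuring that no functionals with infinite support sneak into the weak$^*$ closure.
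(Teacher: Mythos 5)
Your proposal is correct and follows essentially the same route as the paper: norming set plus Lemma \ref{norming}, Milman's theorem, identification of the weak$^*$ closure of the sign vectors via compactness of $\F$, and the midpoint trick to discard non-maximal supports. The one place you go beyond the paper is in proving the inclusion $\left\{\sum_{i\in F}\theta_i e_i^*: F\in\FM\right\}\subseteq Ext(B_{X_\F^*})$ in detail (forcing the coordinates on $F$ with the vectors $\bar\theta_i e_i$ and ruling out extra coordinates using maximality of $F$), a step the paper merely asserts; your verification is sound and a welcome addition rather than a divergence.
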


\begin{proof}
Let $N:=\{\sum_{i\in F}\theta_i e^*_i : F\in \FM, |\theta_i|=1\}$ and $M:=\{\sum_{i\in F}\theta_i e^*_i : F\in \F, |\theta_i|=1\}$. Note that $M$ is norming for $X_{\F}$ and sign invariant.  From Lemma \ref{norming} it follows that  $B_{X_\F^*}=\overline{conv(M)}^{w^*}$.
 We claim that $M$ is $w^*$- closed. Then both $M$ and $B_{X_\F^*}=\overline{conv(M)}^{w*}$ are compact in the locally convex space $(X_\F^*,w^*)$, so by Milman's theorem (see \cite{rudin}, Theorem 3.25), every extreme point of $B_{X_\F^*}$ lies in $M$.  

Since $N\subseteq Ext(B_{X_\F^*})$ and no $x\in M\setminus N$ is an extreme point (any such $x$ is easily written as the middle point of two different points of $N$), it follows that $Ext(B_{X_\F^*})=N$. 

To prove the claim we note that if a sequence of points of $M$ converges $w^*$ to some $y$, then the compactness of $\F$ imples that the support of $y$ belongs to $\F$. The $w^*$ convergence implies coordinatewise convergence, and so each non-zero coordinate of $y$ must be a sign, which concludes the proof of the claim.
\end{proof}

\section{Isometries between combinatorial spaces}\label{secIsom}

Let $T: X_{\F}^* \rightarrow X_{\G}^*$ be an operator between duals of combinatorial spaces $X_{\F}$ and $X_{\G}$, sending extreme points of the unit ball of $X_{\F}^*$ to extreme points of the unit ball of $X_{\G}^*$.
Our goal is to show that for any $i\in\N$, $Te^*_i$ is of the form $\sum_{j \in A_i} \theta_j^i e_j^*$, for finite subsets $A_i$ of $\N$ and sequences of signs $(\theta_j^i)_{j \in A_i}$ (Proposition \ref{permprop}).
Then, assuming $T$ is a surjective isometry, we prove that
$T$ is induced by a signed permutation.

Since
$$
Ext(X_{\F}^{*})=\left\{\sum_{i\in F}\theta_i e^*_i : F\in\FM \text{ and }(\theta_i)_{i \in F} \text{ is a sequence of signs}\right\},
$$
it follows that if $x^*\in Ext(X_{\F}^{*})$, then for any $i\in\N$, $|x^*(e_i)| \in\{0, 1\}$. 

\begin{lemma} \label{onehalf}
Suppose $n\in\N$, $\{n\}\notin\FM$, and let $k\in Supp (Te^*_n)$ such that $|\alpha^n_k|\neq 1$. Then  $|\alpha^n_k|= \frac{1}{2}$, and for any $F\in\FM$ such that $n\in F$, there exists a unique $m\in F\setminus\{n\}$ such that $|\alpha^m_k|= \frac{1}{2}$. Moreover, $\alpha^j_k=0$ for all $j\in F\setminus\{n,m\}$.
\end{lemma}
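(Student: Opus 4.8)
The plan is to reduce the statement to an elementary question about finitely many scalars and then treat the real and complex fields separately. The only property of $T$ we use is that it carries extreme points of $B_{X_\F^*}$ to extreme points of $B_{X_\G^*}$. Fix once and for all a maximal set $F \in \FM$ with $n \in F$; such a set exists because $\{n\} \notin \FM$, so by Lemma \ref{propRegular} the singleton $\{n\}$ extends to a maximal element of $\F$. For every sign vector $(\theta_i)_{i \in F}$ the functional $\sum_{i \in F} \theta_i e_i^*$ is an extreme point of $B_{X_\F^*}$ by Proposition \ref{extcharact}, hence so is its image $T(\sum_{i \in F} \theta_i e_i^*) = \sum_j \big(\sum_{i \in F} \theta_i \alpha_j^i\big) e_j^*$. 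Reading off the $k$-th coordinate and using that every coordinate of an extreme point has modulus in $\{0,1\}$, we obtain the master constraint
$$
\Big| \sum_{i \in F} \theta_i\, \alpha_k^i \Big| \in \{0,1\} \quad \text{for every sign vector } (\theta_i)_{i \in F}.
$$
Writing $a_i := \alpha_k^i$, the lemma becomes a purely arithmetic assertion about the scalars $(a_i)_{i \in F}$, under the standing hypotheses $a_n \neq 0$ and $|a_n| \neq 1$.

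First I would normalise. Choosing the signs so as to align all phases (i.e.\ $\theta_i = \overline{a_i}/|a_i|$ whenever $a_i \neq 0$) realises the maximum $\max_\theta |\sum_{i \in F} \theta_i a_i| = \sum_{i \in F} |a_i|$, which by the master constraint lies in $\{0,1\}$. Since $a_n \neq 0$ it is positive, so $\sum_{i \in F} |a_i| = 1$. This identity is the common starting point for both fields. In the real case the signs are just $\pm 1$, and I would compare two sign vectors differing in a single coordinate $j$: the two resulting values of $\sum_i \theta_i a_i$ lie in $\{-1,0,1\}$ and differ by exactly $2\theta_j a_j$, so $2|a_j|$ is a distance between two elements of $\{-1,0,1\}$ and hence $|a_j| \in \{0, \tfrac12, 1\}$ for every $j \in F$. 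Combined with $\sum_{i \in F}|a_i| = 1$ and with $|a_n| \notin \{0,1\}$, this forces $|a_n| = \tfrac12$; the remaining coordinates must then contribute $\tfrac12$ to the total, which is possible only if exactly one further index $m \in F \setminus \{n\}$ has $|a_m| = \tfrac12$ and all other $a_j$ vanish. This yields at once the existence and uniqueness of $m$ and the vanishing of $\alpha_k^j$ for $j \in F \setminus \{n,m\}$.

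The complex case is where the behaviour differs, and it is the step I expect to require the most care. Here the signs sweep out the whole unit circle, so I would fix all coordinates except one $j$ with $a_j \neq 0$ and let $\theta_j$ vary: the function $\theta_j \mapsto |\theta_j a_j + c|$, with $c := \sum_{i \neq j} \theta_i a_i$, is continuous and, by the master constraint, takes values in the two-point set $\{0,1\}$, so it is constant. Expanding $|\theta_j a_j + c|^2 = |a_j|^2 + |c|^2 + 2\,\mathrm{Re}(\bar c\, \theta_j a_j)$ shows that constancy forces $c = 0$. Taking $j = n$, this would have to hold for \emph{every} choice of the remaining signs, which is impossible as soon as some $a_p$ with $p \in F \setminus \{n\}$ is nonzero (one may then pick signs making $c \neq 0$). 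Hence $a_n$ is the only nonzero entry, and then $|\sum_i \theta_i a_i| = |a_n|$ must lie in $\{0,1\}$, giving $|a_n| = 1$ and contradicting the hypothesis.

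Thus in the complex case the hypotheses of the lemma are never satisfied and the statement holds vacuously; the genuine content, including the value $\tfrac12$, is a real-field phenomenon. The main obstacle is precisely to recognise this dichotomy and to organise the argument so that the elementary sign-difference computation (real) and the continuity-to-constancy computation (complex) both flow from the single master constraint, together with the harmless but necessary observation that a suitable maximal $F$ containing $n$ exists.
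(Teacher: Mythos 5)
Your proof is correct. In the real case it is essentially the paper's argument, mildly reorganised: the paper also aligns phases to get $\sum_{j\in F}\theta_j\alpha^j_k=1$ and then flips a single sign to pin down the coordinates; your one-coordinate flip giving $|a_j|\in\{0,\tfrac12,1\}$ for every $j$, followed by pigeonholing against $\sum_{i\in F}|a_i|=1$, is the same mechanism. Where you genuinely diverge is the complex case. The paper does not split by field in this lemma: both of its computations (aligned phases, and one flipped sign, which keeps the $k$-coordinate real so that it lies in $\{-1,0,1\}$) are valid over $\C$, so it proves the stated conclusions uniformly and only confronts the real/complex dichotomy later, inside Case 2 of Proposition \ref{permprop}, where the quantity $\tfrac12(\theta_i\epsilon_i+\theta_j\epsilon_j)$ is rotated to kill the $\tfrac12$ configuration. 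You instead front-load that dichotomy: your continuity argument ($\theta_n\mapsto|\theta_n a_n+c|$ is continuous on the connected circle with values in the discrete set $\{0,1\}$, hence constant, forcing $c=0$ since $a_n\neq 0$) shows the hypotheses are unsatisfiable over $\C$, so the lemma holds vacuously there. That argument is sound, and in fact stronger than what the paper extracts at this stage: over $\C$ it delivers both this lemma and Lemma \ref{disjoint} in one stroke (every nonzero coordinate of $Te_n^*$ has modulus $1$, and all other coordinates over $F$ vanish), and it would shorten the complex branch of Proposition \ref{permprop}. Two small remarks: after establishing $c=0$ for every choice of the remaining signs, aligning phases gives $\sum_{i\in F\setminus\{n\}}|a_i|=0$ immediately, which is quicker than varying $\theta_p$; and although you say you fix $F$ ``once and for all,'' your computation uses nothing special about $F$, so the conclusion quantified over all maximal $F\ni n$ does follow as the lemma requires.
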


\begin{proof}
Let $F\in\FM$ such that $n\in F$. If for all $j\in F\setminus\{n\}$, $k\notin Supp(Te^*_j)$, then we have
$$
T(\sum_{j\in F}e^*_j)(e_k)=\sum_{j\in F}Te^*_j(e_k)=\alpha^n_k
$$

Since $|\alpha^n_k| \notin\{0,1\}$, it follows that $T(\sum_{j\in F}e^*_j) \notin Ext(X_{\G}^{*})$, contradicting the fact that $\sum_{j\in F}e^*_j\in Ext(X_{\F}^{*})$ and $T$ preserves extreme points. Therefore there exists $m\in F$, $m \neq n$, such that  $k\in Supp(Te^*_m)$. Consider $(\theta_j)_{j\in F}$ a sequence of signs such that for any $j\in F$ we have $\theta_j\alpha^j_k\geq 0$. Since $\sum_{j\in F}\theta_j e^*_j\in Ext(X_{\F}^{*})$, it follows that $T(\sum_{j\in F}\theta_j e^*_j)\in Ext(X_{\G}^{*})$, hence

\begin{equation} \label{signs}
    T(\sum_{j\in F}\theta_j e^*_j)(e_k)=\sum_{j\in F}\theta_j Te^*_j(e_k)=\sum_{j\in F}\theta_j\alpha^j_k=1
\end{equation}

as all $\theta_j\alpha^j_k$ are non-negative and at least two, namely $\theta_n \alpha^n_k$ and  $\theta_m \alpha^m_k$, are positive.

On the other hand $\sum_{j\in F\setminus\{n\}}\theta_j e^*_j-\theta_n e^*_n$ is also an extreme point whose image by $T$ has real value in $e_k$, hence

\begin{equation} \label{minus}
\sum_{j\in F\setminus\{n\}}\theta_j Te^*_j(e_k)-\theta_n Te^*_n(e_k)=\sum_{j\in F\setminus\{n\}}\theta_j\alpha^j_k-\theta_n\alpha^n_k\in\{-1, 0, 1\}.
\end{equation}

From (\ref{signs}), (\ref{minus}), and the fact that $\theta_n \alpha^n_k$ and  $\theta_m \alpha^m_k$ are positive, it follows easily that
$\sum_{j\in F\setminus\{n\}}\theta_j\alpha^j_k-\theta_n\alpha^n_k=0$, and solving for $\theta_n\alpha^n_k$ we obtain that $\theta_n \alpha^n_k=\frac{1}{2}$. In a similar manner, reversing the roles of $n$ and $m$ we obtain that $\theta_m \alpha^m_k=\frac{1}{2}$ as well. Plugging these values into (\ref{signs}), and taking into account that all $\theta_j\alpha^j_k$ are non-negative, it follows that $\alpha^j_k=0$ for all $j\in F\setminus\{n,m\}$.
\end{proof}

\begin{lemma} \label{disjoint}
Suppose $n\in\N$, $\{n\}\notin\FM$, and let $k\in Supp (Te^*_n)$ such that $|\alpha^n_k|= 1$.
Then for any $F\in\FM$ such that $n\in F$, and for any $j\in F\setminus\{n\}$, $\alpha^j_k=0$.
\end{lemma}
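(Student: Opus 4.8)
The plan is to mimic the sign-alignment idea from the proof of Lemma \ref{onehalf}, but now exploiting that the hypothesis $|\alpha^n_k| = 1$ already saturates the maximal size a single coordinate of an extreme point may have. Fix $F \in \FM$ with $n \in F$ (such $F$ exists by Lemma \ref{propRegular}, and $|F| \geq 2$ since $\{n\} \notin \FM$). The structural fact I would use is the extreme-point characterization from Proposition \ref{extcharact}: for every choice of signs $(\theta_j)_{j \in F}$ the functional $\sum_{j \in F} \theta_j e^*_j$ lies in $Ext(X_\F^*)$, so its image $T(\sum_{j \in F} \theta_j e^*_j)$ lies in $Ext(X_\G^*)$, and therefore its value at $e_k$ has modulus in $\{0,1\}$.

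First I would choose the signs to align all relevant coordinates onto the nonnegative real axis: for each $j \in F$ pick $\theta_j$ with $\theta_j \alpha^j_k = |\alpha^j_k| \geq 0$ (taking $\theta_j = \overline{\alpha^j_k}/|\alpha^j_k|$ when $\alpha^j_k \neq 0$ and $\theta_j = 1$ otherwise; in the real case this is just a choice in $\{\pm 1\}$). With this choice the $k$-th coordinate of the image is a nonnegative real equal to its own modulus,
$$
T\Big(\sum_{j \in F} \theta_j e^*_j\Big)(e_k) = \sum_{j \in F} \theta_j \alpha^j_k = \sum_{j \in F} |\alpha^j_k|,
$$
and the extreme-point constraint then forces $\sum_{j \in F} |\alpha^j_k| \in \{0,1\}$.

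Finally, since $|\alpha^n_k| = 1$ by hypothesis, the sum $\sum_{j \in F} |\alpha^j_k|$ is at least $1$, so combined with the constraint it must equal exactly $1$. Isolating the term with index $n$ gives $\sum_{j \in F \setminus \{n\}} |\alpha^j_k| = 0$, whence $\alpha^j_k = 0$ for every $j \in F \setminus \{n\}$, which is precisely the conclusion.

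I do not anticipate a serious obstacle here: the only point needing a little care is making the sign alignment uniform in the complex case (rotating each $\alpha^j_k$ onto the nonnegative real axis), after which the size bound on coordinates of extreme points does all the work. The argument is markedly shorter than that of Lemma \ref{onehalf} precisely because the saturating value $|\alpha^n_k| = 1$ leaves no room for any competing coordinate.
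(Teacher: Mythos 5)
Your proposal is correct and follows essentially the same route as the paper: you align the signs so that every $\theta_j\alpha^j_k=|\alpha^j_k|\geq 0$, invoke the extreme-point characterization to force the $k$-th coordinate of $T(\sum_{j\in F}\theta_j e^*_j)$ to have modulus in $\{0,1\}$, and then use $|\alpha^n_k|=1$ to conclude that the remaining nonnegative terms vanish. The paper's proof is the same argument, phrased with values in $\{-1,0,1\}$ rather than with the explicit nonnegativity observation, so there is no substantive difference.
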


\begin{proof}
Pick $F\in\FM$ such that $n \in F$, and consider the extreme points $\sum_{j\in F}\theta_j e^*_j$, where $(\theta_j)_{j\in F}$ are choices of signs such that $\theta_j \alpha_k^j $ is non-negative for all $j \in F$. Since $T(\sum_{j\in F}\theta_j e^*_j)$ is also an extreme point, it follows that

$$
\theta_n\alpha^n_k+\sum_{j\in F\setminus\{n\}}\theta_j\alpha^j_k
\in\{-1, 0, 1\},
$$
for all signs $(\theta_j)_{j\in F}$. Clearly this is only possible if $\alpha^j_k=0$ for all $j\in F\setminus\{n\}$.
\end{proof}


\begin{lemma}\label{largenorm}
For any finitely supported $x^*=\sum_{i \in A} \theta_i e_i^* \in X_\F^*$ with $|\theta_i|=1$, we have that $\Vert x^* \Vert = 1$ if and only if $A \in \F$ (and $\Vert x^*\Vert >1$ otherwise).
\end{lemma}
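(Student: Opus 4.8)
The plan is to compute the dual norm directly from duality, $\norm{x^*}=\sup\{|x^*(x)|:\norm{x}\le 1\}$, by testing $x^*$ against carefully chosen finitely supported vectors $x\in X_\F$. The one computation that drives everything is the following: if $x$ is supported on $A$, then $\norm{x}=\sup\{\sum_{i\in F\cap A}|x(i)|:F\in\F\}=\max\{\sum_{i\in B}|x(i)|: B\subseteq A,\ B\in\F\}$, where I use that $\F$ is hereditary so that $F\cap A\in\F$ for every $F\in\F$. Thus the full $\ell_1$-sum $\sum_{i\in A}|x(i)|$ is realized as $\norm{x}$ precisely when $A\in\F$, and otherwise only a proper subsum can ever be attained. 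This dichotomy is exactly the source of the equivalence in the statement.

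Suppose first that $A\in\F$ (and $A\neq\emptyset$, the zero functional being excluded, as it has norm $0$). For the upper bound, I would estimate for any $x$ that $|x^*(x)|=|\sum_{i\in A}\theta_i x(i)|\le\sum_{i\in A}|x(i)|\le\norm{x}$, the last inequality holding because $A\in\F$ contributes the term $\sum_{i\in A}|x(i)|$ to the supremum defining $\norm{x}$; hence $\norm{x^*}\le 1$. For the matching lower bound it suffices to test against a single basis vector: fixing $i_0\in A$, one has $\norm{e_{i_0}}=1$ (singletons lie in $\F$) and $|x^*(e_{i_0})|=|\theta_{i_0}|=1$, so $\norm{x^*}\ge 1$. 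Combining the two bounds gives $\norm{x^*}=1$.

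Now suppose $A\notin\F$. Here I would test against the normalized uniform vector $x=\frac{1}{|A|}\sum_{i\in A}\overline{\theta_i}\,e_i$ (in the real case $\overline{\theta_i}=\theta_i$). A direct computation gives $x^*(x)=\frac{1}{|A|}\sum_{i\in A}|\theta_i|^2=1$, while by the norm formula from the first paragraph $\norm{x}=\frac{1}{|A|}\max\{|B|:B\subseteq A,\ B\in\F\}$. Since $A\notin\F$, no admissible $B$ equals $A$, so $|B|\le|A|-1$ and therefore $\norm{x}\le\frac{|A|-1}{|A|}<1$. Consequently $\norm{x^*}\ge|x^*(x)|/\norm{x}=1/\norm{x}>1$.

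The argument is essentially self-contained; the only point requiring any care is the strict inequality in the last case, which rests entirely on the combinatorial observation that the largest subset of $A$ lying in $\F$ has cardinality strictly less than $|A|$ whenever $A\notin\F$. Notably, neither compactness nor the spreading property of $\F$ is used here — only that $\F$ is hereditary and contains all singletons.
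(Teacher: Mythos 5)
Your proof is correct and takes essentially the same route as the paper's: the upper bound for $A\in\F$ via the duality estimate, and for $A\notin\F$ the same test vector $\sum_{i\in A}\bar{\theta}_i e_i$ — you normalize by $1/|A|$ and then divide by its norm, while the paper normalizes by $1/(|A|-1)$ so the vector lies directly in the unit ball, but the underlying computation (the largest subset of $A$ in $\F$ has size at most $|A|-1$, by heredity) is identical. Your explicit formula $\norm{x}=\max\{\sum_{i\in B}|x(i)|: B\subseteq A,\ B\in\F\}$ merely makes the paper's implicit use of the hereditary property explicit.
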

\begin{proof}
Since each $e_i \in X_\F$ has norm 1 and $|x^*(e_i)|=|\theta_i|=1$ for $i \in A$, clearly $\Vert x^*\Vert \geq 1$.

If $A\in \F$, then given $x = \sum_{j} \alpha_j e_j \in X_\F$ such that $\Vert x \Vert_\F =1$, we have that $\sum_{j \in A} |\alpha_j| \leq 1$, so that
$$|x^*(x)| \leq \sum_{i\in A} |\theta_i|\cdot |\alpha_i| = \sum_{i\in A} |\alpha_i| \leq 1.$$
Therefore, $\Vert x^* \Vert \leq 1$.

Conversely, if $A \notin \F$, then $|A| \geq 2$, so let $x=\frac{1}{|A|-1}\sum_{i\in A} \bar{\theta}_i e_i$ and notice that $\Vert x \Vert \leq 1$ and 
$$x^*(x) = \frac{1}{|A|-1}\sum_{i\in A} \theta_i \bar{\theta}_i = \frac{1}{|A|-1} \sum_{i \in A} |\theta_i|^2 = \frac{|A|}{|A|-1} > 1,$$
so that $\Vert x^* \Vert >1$.
\end{proof}

\begin{proposition} \label{permprop}
Let $T:X^*_{\F}\to X^*_{\G}$ preserve extreme points, where $\F, \G$ are regular families. Then the vectors $Te_i^*$, $i \in \N$, are of the form $\sum_{j \in A_i} \theta_j^i e_j^*$ for sets $A_i \in \G$ of $\N$ and signs $\theta_j^i$. 
\end{proposition}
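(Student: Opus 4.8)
The plan is to fix $i=n$ and split according to whether $\{n\}$ is a maximal element of $\F$. If $\{n\}\in\FM$, then $e_n^*$ is itself an extreme point of $B_{X_\F^*}$ by the displayed characterization of $Ext(X_\F^*)$, so $Te_n^*$ is an extreme point of $B_{X_\G^*}$; hence it already has the required form, with $A_n\in\GM\subseteq\G$ and all coefficients signs. The whole difficulty is therefore concentrated in the case $\{n\}\notin\FM$, where Lemmas \ref{onehalf} and \ref{disjoint} tell us that each nonzero coefficient $\alpha_k^n$ of $Te_n^*$ has modulus either $1$ or $\tfrac12$. The heart of the argument is to rule out the value $\tfrac12$.

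To do this I would argue by contradiction, assuming $|\alpha_k^n|=\tfrac12$ for some $k$. Applying Lemma \ref{spreading} to $n$ produces infinitely many pairwise disjoint finite sets $G_1<G_2<\cdots$ with $G_i\cup\{n\}\in\FM$ and with $|G_i|$ nondecreasing; feeding each maximal set $G_i\cup\{n\}$ into Lemma \ref{onehalf} yields, for every $i$, a \emph{partner} $m_i\in G_i$ with $|\alpha_k^{m_i}|=\tfrac12$. Since the $G_i$ are disjoint, the $m_i$ are distinct, so there are infinitely many indices carrying a half-coefficient at $k$. The goal is now to manufacture a single $F\in\F$ containing $n$ together with two distinct partners $m_i,m_j$: extending such an $F$ to a maximal set and invoking the uniqueness clause of Lemma \ref{onehalf} (there is only one partner of $n$ in any maximal set containing it) gives the desired contradiction. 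When some $G_i$ has $|G_i|\ge 2$, I can build this $F$ directly: picking $g\in G_i\setminus\{m_i\}$ and a later partner $m_j>\max G_i$, the spreading property lets me replace $g$ by $m_j$ in $G_i\cup\{n\}$, producing a set of $\F$ that contains $n,m_i,m_j$. If instead every maximal set with minimum $n$ is a pair, one first checks that $|\alpha_k^m|=\tfrac12$ for all sufficiently large $m$, and then, provided $\F$ contains a set of size $\ge 3$ at all, one spreads such a set upward so that its three elements all carry half-coefficients, again contradicting uniqueness.

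I expect the main obstacle to be precisely the residual degenerate situation in which $\F$ has no sets of size larger than two, so that no maximal set can ever contain a base index and two partners simultaneously, and the norm estimate of Lemma \ref{largenorm} produces no blow-up either. Here I would abandon the combinatorics and analyze directly the images of the extreme points $\theta_n e_n^*+\theta_m e_m^*$ and $\theta_n e_n^*-\theta_m e_m^*$ for several indices $m$: writing $Te_m^*=\tfrac12 e_k^*+u_m-Te_n^*$ with $u_m$ supported off $k$, the requirement that all these sums and differences be extreme points of $B_{X_\G^*}$ forces an overdetermined linear system on the $u_m$ that has no solution once four indices are used.

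Finally, once the half-coefficients are excluded, every nonzero $\alpha_k^n$ is a sign, and the conclusion follows cleanly from Lemma \ref{disjoint}. Choosing a maximal $F\ni n$ via Lemma \ref{propRegular}, Lemma \ref{disjoint} shows that for each $k\in Supp(Te_n^*)$ the coefficient $\alpha_k^j$ vanishes for every $j\in F\setminus\{n\}$; hence, for any fixed choice of signs $(\theta_j)_{j\in F}$, the $e_k^*$-coefficient of the extreme point $T(\sum_{j\in F}\theta_j e_j^*)$ equals $\theta_n\alpha_k^n\neq 0$. Thus $Supp(Te_n^*)$ is contained in the support of a single extreme point of $B_{X_\G^*}$, which lies in $\GM$; by heredity of $\G$ this gives $A_n:=Supp(Te_n^*)\in\G$ and, in particular, that it is finite, completing the proof.
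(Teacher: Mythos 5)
Your combinatorial reductions are sound, and in places they give a genuinely cleaner route than the paper's. Where the paper's Case 1 uses the indices $p_i\in G_i$ with $\alpha^{p_i}_k=0$ to assemble, via Lemma \ref{propRegular}, a maximal set $E\subseteq\{n,p_1,p_2,\dots\}$ whose image has $k$-coordinate of modulus $\tfrac12$, you instead spread a later partner $m_j$ into a set containing $n$ and \emph{two} half-carriers and contradict the uniqueness clause of Lemma \ref{onehalf}; the replacement of $g\in G_i\setminus\{m_i\}$ by $m_j>\max G_i$ is indeed a legitimate spreading, so this works. Likewise, your observation that in the all-pairs situation $\{n,m\}\in\FM$ and hence $|\alpha^m_k|=\tfrac12$ for all sufficiently large $m$, so that any member of $\F$ of size $\geq 3$ could be spread upward to three half-carriers and again violate uniqueness, correctly disposes of most of the paper's Case 2 (the paper instead bootstraps through partners belonging to large maximal sets). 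Your closing derivation of $A_n\in\G$ via containment of $Supp(Te_n^*)$ in the support of a single extreme point is also a valid variant of the paper's appeal to Lemma \ref{largenorm}.

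The genuine gap is exactly where you predicted the obstacle: the residual case in which all relevant sets of $\F$ are pairs (this is a real case, e.g.\ $\F=[\N]^{\leq 2}$ is regular), and your proposed mechanism for it fails. First, the decomposition $Te_m^*=\tfrac12 e_k^*+u_m-Te_n^*$ with $u_m$ supported off $k$ is inconsistent: it would make the $k$-coordinate of the extreme point $T(e_n^*+e_m^*)$ equal to $\tfrac12$, which is already impossible. More seriously, the claimed ``overdetermined linear system on the $u_m$ with no solution once four indices are used'' does not exist: setting $Te_{q_i}^*=\tfrac12 e_k^*+\tfrac12 y^*+z_i^*$ with one fixed unimodular-coefficient vector $y^*$ and pairwise disjointly supported unimodular $z_i^*$, every combination $T(e_{q_i}^*\pm e_{q_j}^*)$ has all coordinates in $\{0,\pm 1\}$, for arbitrarily many indices; so no contradiction can be extracted from coefficient values alone. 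The contradiction must use the structure of $\G$: extreme points are supported exactly on members of $\GM$, and the paper, after a pigeonhole over the finitely many sign patterns on a common finite support $C$ (whose existence itself requires an argument, using that $T(e_{q_i}^*\pm e_{q_j}^*)$ has no $\pm\tfrac12$ coordinates) to find $i\neq j$ with $y_i^*=y_j^*$, derives that $Supp\, T(e_{q_i}^*-e_{q_j}^*)$ is \emph{strictly} contained in $Supp\, T(e_{q_i}^*+e_{q_j}^*)$ — impossible for two sets that must both lie in $\GM$. (In the complex case there is a shortcut your real-flavored sums-and-differences sketch also misses: the $k$-coordinate $\tfrac12(\theta_i\epsilon_i+\theta_j\epsilon_j)$ of $T(\theta_ie_{q_i}^*+\theta_je_{q_j}^*)$ can be given modulus different from $0$ and $1$ by choosing, say, $\theta_j$ so that $\theta_j\epsilon_j=i\,\theta_i\epsilon_i$.) As written, your plan for this case is a dead end rather than a fillable routine verification.
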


\begin{proof}
With the previous notations, we are going to show first that for any $n\in \N$ and any $k\in\N$ we have $|\alpha^n_k| \neq\frac{1}{2}$. Fix $n\in\N$ arbitrary and note first that if $\{n\}\in\FM$ then $e^*_n$ is an extreme point, hence so is $Te^*_n$, and it follows that for any $k\in\N$ we have that $|Te^*_n(e_k)|=|\alpha^n_k|\in\{ 0, 1\}$. When $\{n\}\notin\FM$, assume towards a contradiction that there exists $k\in\N$ such that $|\alpha^n_k|=\frac{1}{2}$.  We are going to consider separately two cases: when $n$ belongs to a maximal set of size at least three, and when $n$ belongs only to maximal sets of size two.
\vskip .5cm
Case 1: There exists $F\in\FM$ such that $n\in F$ and $|F|\geq 3$.

In this case, construct a sequence as in Lemma \ref{spreading}, starting with $G_1:=F\setminus\{n\}$. It follows that for each $i\in\N$, $|G_i|\geq 2$. Since $|\alpha^n_k|=\frac{1}{2}$, and for any $i\in\N$ we have that $G_i\cup\{n\}\in\FM$ and $|G_i|\geq 2$, from Lemma \ref{onehalf} we conclude that there exists a sequence $p_i\in G_i$, $i\in\N$, such that $\alpha^{p_i}_k=0$.  From Lemma \ref{propRegular}, there is $E\in\FM$ such that $E \subseteq \{n, p_1, p_2, \dots\}$ and $n = \min E$. However,
$$
|T(\sum_{j\in E}e^*_j)(e_k)|=|\alpha^n_k+\sum_{p_i\in E\setminus\{n\}}\alpha^{p_i}_k|=|\alpha^n_k|=\frac{1}{2},
$$
contradicting the fact that $T(\sum_{j\in E}e^*_j)$ is an extreme point.

\vskip .5cm

Case 2: For any $F\in\FM$ such that $n\in F$, $|F|=2$.

Assume there exists $m>n$ such that $\{n,m\}\in\FM$ and $m$ belongs to a maximal set of size at least $3$. Then it follows from Lemma \ref{onehalf}  that $|\alpha^m_k|=\frac{1}{2}$, and from Case 1, applied to $m$, we obtain a contradiction. Hence, we may also assume that for any $m>n$, such that $\{n,m\}\in\FM$, $m$ only belongs to maximal sets of size $2$. Construct a sequence of sets $n<G_1<G_2<\dots$ as in Lemma \ref{spreading}. Then we must have that each $G_i$ is a singleton, so we obtain a sequence $n<q_1<q_2<\dots$ such that $\{n,q_i\}\in\FM$ for all $i\in\N$. Also, from Lemma \ref{onehalf}, we conclude that $|\alpha^{q_i}_k|=\frac{1}{2}$ for all $i\in\N$. From spreading we have that $\{q_i, q_j\}\in\F$ for all $i<j$, and since no $q_i$ belongs to a maximal set of size at least 3, it follows that actually $\{q_i, q_j\}\in\FM$ for all $i<j$. 

For each $i$ write
$T(e_{q_i}^*)=\frac{1}{2}\epsilon_{i} e_k^*+\frac{1}{2}y_i^*+z_i^*$, where $\epsilon_i$ is a sign, the three vectors are disjointly supported (possibly $y_i^*$ or $z_i^*$ is $0$), and $y_i^*$ and $z_i^*$ only have coordinates of modulus $1$ on their support.
In the complex case, we note that $T(\theta_i e_{q_i}^*+\theta_j e_{q_j}^*)$ being an extreme points for all signs $\theta_i, \theta_j$ contradicts the fact that its $k$-coordinate is $\frac{1}{2}(\theta_i \epsilon_i +\theta_j \epsilon_j)$, which can assume values of modulus different from $0$ and $1$. In the real case, passing to a subsequence we may assume that $\epsilon_i$ is constant, and without loss of generality, equal to $1$; from the fact that for $i \neq j$, $T(e_{q_i}^*\pm e_{q_j}^*)$ must be an extreme point and therefore does not have $\pm \frac{1}{2}$ coordinates, we deduce that the support of $y_i^*$ is some finite set $C$ independent of $i$ and that $z_i^*$ is disjointly supported from $k$, from  $y_i^*$ and from all other $z_j^*$. Since $C$ is finite we find $i \neq j$ such that $y_i^*=y_j^*$, and we compute
$$T(e_{p_i}^*+e_{p_j}^*)=e_k^*+y_i^*+z_i^*+z_j^*,$$
and
$$T(e_{p_i}^*-e_{p_j}^*)= +z_i^*-z_j^*,$$
and so the second vector has support strictly included in the support of the first one. But this contradicts that both must belong to $\GM$.

The fact that $A_i \in \G$ follows from Lemma \ref{largenorm}.
\end{proof}

\begin{theorem} \label{perm}
Let $T:X^*_{\F}\to X^*_{\G}$ be an isometry, where $\F, \G$ are regular families. Then there exists a permutation $\pi:\N\to\N$ and a sequence of signs $(\theta_i)_i$ such that $Te^*_i=\theta_i e^*_{\pi(i)}$ for all $i\in\N$.
\end{theorem}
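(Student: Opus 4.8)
The plan is to build on Proposition \ref{permprop} and the duality available from $T$ being a surjective isometry. By Proposition \ref{permprop}, each $Te_i^* = \sum_{j \in A_i} \theta_j^i e_j^*$ with $A_i \in \G$ and signs $\theta_j^i$. Since $T$ is a surjective isometry, its inverse $T^{-1} : X_\G^* \to X_\F^*$ is also a surjective isometry and hence also preserves extreme points, so by the same proposition applied to $T^{-1}$, each $T^{-1}e_j^* = \sum_{i \in B_j} \eta_i^j e_i^*$ with $B_j \in \F$. The goal is to prove that every $A_i$ is in fact a singleton, which immediately yields the signed permutation form. First I would argue that the $A_i$ are pairwise disjoint: if two distinct $Te_i^*$ and $Te_{i'}^*$ both charged a common coordinate $k$, then by linearity one could form combinations $Te_i^* \pm Te_{i'}^*$ whose $k$-coordinate has modulus $2$ or $0$; but since $T$ is an isometry and $\|e_i^* \pm e_{i'}^*\|$ is controlled by Lemma \ref{largenorm} (being $1$ whenever $\{i,i'\} \in \F$, which holds for singletons), one gets a norm contradiction, forcing the supports $A_i$ to be disjoint. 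This disjointness is the crucial structural fact.

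With disjoint supports in hand, I would use the isometry property together with Lemma \ref{largenorm} to control cardinalities. For any $F \in \F$, the vector $\sum_{i \in F} \theta_i e_i^*$ has norm $1$ (as $F \in \F$), so its image $T(\sum_{i \in F} \theta_i e_i^*) = \sum_{i \in F} \theta_i \sum_{j \in A_i} \theta_j^i e_j^*$ must also have norm $1$; by disjointness of the $A_i$ this image is supported on $\bigcup_{i \in F} A_i$ with all coordinates of modulus $1$, so Lemma \ref{largenorm} forces $\bigcup_{i \in F} A_i \in \G$. In particular, taking $F$ maximal and comparing $\sum_{i \in F} |A_i|$ against the size of a $\G$-set containing it, and running the symmetric argument for $T^{-1}$, I expect to pin down that each $|A_i| = 1$. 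The cleanest route is: if some $|A_{i_0}| \geq 2$, then a single $i_0$ already spreads to at least two coordinates, and applying $T^{-1}$ to each of the $e_j^*$, $j \in A_{i_0}$, must return singletons summing back to $e_{i_0}^*$; tracking which $B_j$ contain $i_0$ and using disjointness of the $B_j$ yields that at most one $j \in A_{i_0}$ has $i_0 \in B_j$, contradicting that $T^{-1}T = \mathrm{Id}$ recovers $e_{i_0}^*$ with coefficient $1$.

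The main obstacle I anticipate is the careful bookkeeping of the two-sided duality: one must verify that applying $T^{-1}$ coordinatewise and re-expanding indeed reconstructs the identity, so that the $A_i$ and $B_j$ form mutually inverse set-maps. Concretely, from $\sum_{j \in A_i} \theta_j^i T^{-1}e_j^* = e_i^*$ and the disjoint singleton structure obtained for $T^{-1}$, the sets $A_i$ must be singletons and the induced map $i \mapsto \pi(i)$, where $A_i = \{\pi(i)\}$, must be a bijection with $T^{-1}$ inducing $\pi^{-1}$. Once every $A_i = \{\pi(i)\}$ is a singleton and $\pi$ is shown to be a well-defined bijection of $\N$ (injectivity from disjointness of the $A_i$, surjectivity from surjectivity of $T$ via the analogous singleton conclusion for $T^{-1}$), the statement $Te_i^* = \theta_i e_{\pi(i)}^*$ with $\theta_i := \theta_{\pi(i)}^i$ follows immediately. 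I would keep the disjointness argument as the technical heart and treat the bijectivity as a consequence of symmetry between $T$ and $T^{-1}$.
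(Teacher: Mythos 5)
Your overall route is the same as the paper's (disjointness of supports via the isometry and Lemma \ref{largenorm}, then $T^{-1}T=\mathrm{Id}$ to force singleton supports, then bijectivity of $T$), but your ``crucial structural fact'' is overclaimed, and its justification fails as stated. You assert that \emph{all} the sets $A_i$ are pairwise disjoint, on the grounds that $\Vert e_i^* \pm e_{i'}^*\Vert$ is $1$ ``whenever $\{i,i'\}\in\F$, which holds for singletons.'' Heredity only goes downward: a regular family need not contain any pair of distinct indices. For instance $\{1,2\}\notin\Sh$, and $\F=[\N]^{\leq 1}$ is regular and contains no pairs at all. When $\{i,i'\}\notin\F$, Lemma \ref{largenorm} gives $\Vert e_i^*\pm e_{i'}^*\Vert>1$ (in fact it equals $2$), so the image under $T$ may legitimately have a coordinate of modulus $2$ and no contradiction arises; your argument therefore proves disjointness of $\mathrm{Supp}\, Te_i^*$ and $\mathrm{Supp}\, Te_{i'}^*$ only for pairs $\{i,i'\}\in\F$, which is exactly the restricted statement the paper proves (and the symmetric one for $T^{-1}$ and pairs in $\G$).

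Fortunately the flaw is local, because every later use of disjointness in your sketch only involves pairs lying in a common member of the relevant family: the computation for $F\in\F$ uses pairs $\{i,i'\}\subseteq F$, which are in $\F$ by heredity, and your ``cleanest route'' uses disjointness of the $B_j$ for $j\in A_{i_0}$, where $\{j,l\}\subseteq A_{i_0}\in\G$ gives $\{j,l\}\in\G$ by heredity. With the disjointness claim restricted in this way, your final step is precisely the paper's proof: from $\sum_{j\in A_{i_0}}\theta^{i_0}_j T^{-1}e_j^*=e_{i_0}^*$ and the pairwise disjointness of the nonempty supports $B_j$, the union $\bigcup_{j\in A_{i_0}}B_j$ equals $\{i_0\}$, forcing $|A_{i_0}|\leq 1$. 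Two smaller points: the phrase ``applying $T^{-1}$ \dots must return singletons'' presupposes for $T^{-1}$ what is being proved, and is unnecessary --- nonemptiness and disjointness of the $B_j$ suffice; and your intermediate observation that $\bigcup_{i\in F}A_i\in\G$ for $F\in\F$ (as well as the cardinality comparison over maximal $F$) is true but plays no role in the corrected argument.
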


\begin{proof}
Note first that if  $\{i,k\} \in \F$ and $i \neq k$, then $Supp Te_i^* \cap Supp Te_k^* = \emptyset$. Indeed, from Proposition \ref{permprop}, $|(Te_i^*)(e_k)|, |(Te_k^*)(e_k)| \in \{0,1\}$. Hence, for a given $j$, let $\theta_i$ and $\theta_k$ be signs such that $\theta_i(Te_i^*)(e_j) = |(Te_i^*)(e_j)|$ and $\theta_k (Te_k^*)(e_j) = |(Te_k^*)(e_j)|$. Since $\Vert \theta_i e_i^* + \theta_k e_k^* \Vert = 1$, we have that $\theta_i(Te_i^*)(e_j) + \theta_k(Te_k^*)(e_j) \leq \Vert\theta_i  Te_i^* + \theta_k Te_k^* \Vert = 1$, so that $|(Te_i^*)(e_k)|, |(Te_k^*)(e_k)|$ cannot be both $1$. This guarantees that the supports are disjoint. Of course, a similar fact holds true for a pair $\{j,l\}\in\G$ and $T^{-1}$. 

We are going to show that for any $n\in\N$, the support of $Te_n^*$ is a singleton. From the fact that $T$ is a bijection, the conclusion of the theorem follows immediately. 

Fix $n\in\N$ arbitrary, and from Proposition \ref{permprop} it follows we can write 
$$
Te_n^*=\sum_{i\in A}\theta_i e^*_i
$$
where $\theta_i$ are signs and $A\in\G$. From the hereditary property we have that for any $j,l\in A$, $j\neq l$, the set $\{j,l\}\in G$. Therefore, from the remark at the beginning of the proof, we conclude that $T^{-1}e^*_j$ and  $T^{-1}e^*_l$ have disjoint support. Hence
$$
\bigcup_{i\in A} Supp( T^{-1}e^*_i)=Supp \left(\sum_{i\in A}\theta_i T^{-1}e^*_i\right)=Supp\left( T^{-1}\sum_{i\in A}\theta_ie^*_i\right)=\{n\}
$$
Therefore  $|A|\leq |\bigcup_{i\in A} Supp( T^{-1}e^*_i)|=1$, and from this it follows immediately that $A$ is a singleton, as claimed. This finishes the proof. 
\end{proof}

The following example shows that a bounded operator that sends extreme points to extreme points and vectors of disjoint support to vectors of disjoint support is not necessarily given by a permutation of the basis. 

\begin{example} The map $T$ defined on the dual of the Schreier space by 
$T(e_n^*)=e_{2n}^*+e_{2n+1}^*$, sends extreme points to extreme points, sends disjoint supports to disjoint supports, but is not induced by a signed permutation.
\end{example}

\begin{proof} For $n \geq 1$ any sum of $e_i^*$ supported on some $F$ such that $|F|=\min F=n$ has image supported on some $F'$ such that $|F'|=2n=\min |F'|$.\end{proof}

\begin{corollary} 
Assume $\F, \G$ are regular families. Then TFAE:
\begin{enumerate}[(i)]
    \item $X_{\F}$ and $X_{\G}$ are isometric;
    \item $X^*_{\F}$ and $X^*_{\G}$ are isometric;
    \item There is a permutation $\pi$ of $\N$ such that $\GM=\{\pi(F): F\in \FM\}$;
    \item There is a permutation $\pi$ of $\N$ such that $\G=\{\pi(F): F\in \F\}$.
\end{enumerate}
\end{corollary}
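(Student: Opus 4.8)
The plan is to prove the equivalences by establishing the cycle $(iv) \Rightarrow (iii) \Rightarrow (ii) \Rightarrow (i)$ together with the harder direction $(i) \Rightarrow (iv)$, so that all four statements become equivalent. The two implications among combinatorial statements, namely $(iv) \Rightarrow (iii)$, should be essentially immediate: if $\pi$ is a permutation with $\G = \{\pi(F): F \in \F\}$, then $\pi$ carries inclusions to inclusions, hence maximal sets to maximal sets, giving $\GM = \{\pi(F) : F \in \FM\}$. The implication $(iii) \Rightarrow (ii)$ is where I would use Proposition~\ref{extcharact}: given such a $\pi$, define $T : X_\F^* \to X_\G^*$ on the basis by $Te_i^* = e_{\pi(i)}^*$ (extended linearly and continuously). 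The characterization of the norm on $X_\F^*$ via $\FM$ (or more directly Lemma~\ref{largenorm}, which identifies norm-one signed basis vectors with supports in $\F$) shows that $\pi$ relabeling coordinates preserves the norm, so $T$ is a surjective isometry. Then $(ii) \Rightarrow (i)$ follows because the adjoint of a surjective isometry between dual spaces restricts to a surjective isometry between the preduals (using that $(e_n)_n$ is shrinking, so $X_\F$ and $X_\G$ are the preduals), as already noted in the introduction.

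The substantive direction is $(i) \Rightarrow (iv)$ (or equivalently $(i) \Rightarrow (ii) \Rightarrow (iv)$), and here is where the main work lies. Starting from an isometry $X_\F \to X_\G$, I would pass to the adjoint to obtain a surjective isometry $T : X_\F^* \to X_\G^*$, and then apply Theorem~\ref{perm} to conclude that there is a permutation $\pi$ and signs $(\theta_i)_i$ with $Te_i^* = \theta_i e_{\pi(i)}^*$. The claim to prove is that this very $\pi$ satisfies $\G = \{\pi(F) : F \in \F\}$. The natural route is through Lemma~\ref{largenorm}: for a finite set $F$ and any choice of signs, $\sum_{i \in F} \theta_i e_i^*$ has norm one in $X_\F^*$ precisely when $F \in \F$. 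Since $T$ is an isometry and $T(\sum_{i \in F} \theta_i e_i^*) = \sum_{i \in F} \theta_i \theta_{i}' e_{\pi(i)}^*$ (a signed basis vector supported on $\pi(F)$), we get that $\sum_{i \in F}\theta_i e_i^*$ has norm one iff its image has norm one, i.e. $F \in \F \iff \pi(F) \in \G$. This gives $\G \supseteq \{\pi(F) : F \in \F\}$ and, running the argument with $T^{-1}$ and $\pi^{-1}$, the reverse inclusion, yielding $(iv)$.

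I would expect the main obstacle to be purely bookkeeping rather than conceptual: one must be careful that the norm criterion of Lemma~\ref{largenorm} is stated for signed basis vectors $\sum_{i \in A}\theta_i e_i^*$ with $|\theta_i| = 1$, and verify that the image under the signed permutation $T$ is again of exactly this form on the support $\pi(A)$, so that the lemma applies verbatim on both sides. A secondary point requiring a remark is the passage from $(ii)$ back to $(i)$ via adjoints, where one should invoke that the bases are shrinking (as established in the introduction) to identify $X_\F$ with the predual of $X_\F^*$, so that a surjective isometry of duals yields one of the original spaces by pre-adjoining. Once these identifications are in place, the equivalence of all four conditions follows by composing the implications around the cycle.
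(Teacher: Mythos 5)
Your overall architecture and the substance of the hard direction match the paper: the key input is Theorem~\ref{perm}, and your derivation of (iv) via Lemma~\ref{largenorm} (a signed vector $\sum_{i\in F}\eta_i e_i^*$ has norm one iff $F\in\F$, and since $T$ maps it to a signed vector supported on $\pi(F)$, we get $F\in\F\iff\pi(F)\in\G$) is a clean and correct variant of the paper's argument, which instead uses preservation of extreme points together with Proposition~\ref{extcharact} to conclude $F\in\FM\iff\pi(F)\in\GM$, i.e.\ (iii), and passes to (iv) by noting that each family is the downward closure of its maximal sets (heredity plus Lemma~\ref{propRegular}). However, your step (ii)~$\Rightarrow$~(i), as justified, is a genuine gap: a surjective isometry between dual Banach spaces need \emph{not} be $w^*$-continuous, hence need not be the adjoint of any operator between the preduals. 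That the bases are shrinking identifies $X_\F$ and $X_\G$ as the preduals, but it does nothing to make an arbitrary isometry $T\colon X_\F^*\to X_\G^*$ ``pre-adjoinable''; compare the classical situation where $c_0^*$ and $c^*$ are both isometric to $\ell_1$ while $c_0$ and $c$ are not isometric, so the resulting surjective isometry of duals is the adjoint of nothing. In the present setting the implication is true, but its proof must itself route through Theorem~\ref{perm}: given (ii), the isometry is a signed permutation, your Lemma~\ref{largenorm} argument yields (iv), and only then can one exhibit the isometry of the preduals explicitly by $e_i\mapsto e_{\pi(i)}$ --- which is exactly the paper's (iv)~$\Rightarrow$~(i). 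A repaired cycle in your spirit is (i)~$\Rightarrow$~(ii) by taking adjoints (this direction is always valid), (ii)~$\Rightarrow$~(iv) by your argument, (iv)~$\Rightarrow$~(iii) trivially, and (iii)~$\Rightarrow$~(i) by the explicit predual operator.

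A secondary soft spot of the same nature occurs in your (iii)~$\Rightarrow$~(ii): Lemma~\ref{largenorm} only controls norms of \emph{signed indicator} vectors, so it cannot by itself show that the relabeling map $e_i^*\mapsto e_{\pi(i)}^*$ is isometric on all of $X_\F^*$, where general vectors have arbitrary coefficients. The painless fix, and the paper's choice, is to build the operator on the preduals, where the norm is defined directly as a supremum over $\F$ (equivalently over $\FM$, since every set extends to a maximal one by Lemma~\ref{propRegular}), so that $\GM=\{\pi(F):F\in\FM\}$ immediately gives an onto isometry $X_\F\to X_\G$; one then obtains the dual isometry by taking adjoints. With these two reroutings your proof is complete and coincides in substance with the paper's, the only genuinely different ingredient being your use of Lemma~\ref{largenorm} in place of the extreme-point characterization to extract the combinatorial condition.
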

\begin{proof}
(i) implies (ii) for any two Banach spaces. 

It follows from Theorem \ref{perm} that if $T:X^*_{\F}\to X^*_{\G}$ is an isometry, then it is induced by a signed permutation
Since $T$ takes extreme points to extreme points, in particular we get that $F \in \FM$ if and only if $\pi(F) \in \GM$. 

(iii) trivially implies (iv).

Finally, if $\pi$ is a permutation of $\N$ such that $\G=\{\pi(F): F\in \F\}$, it is easy to see that $T: X_\F \rightarrow X_\G$ defined by $T(\sum_i \lambda_i e_i) = \sum_i \lambda_i e_{\pi(i)}$ is an onto isometry.
\end{proof}

%


\section{Isometries of Schreier spaces}\label{secSchreier}

\begin{definition}
Given a countable ordinal $\alpha$, we define the Schreier family of order $\alpha$ inductively as follows:
\begin{itemize}
    \item $\mathcal{S}_1 = \mathcal{S}$;
    \item $\mathcal{S}_{\alpha+1} = \{\cup_{j=1}^k E_j: E_j \in \mathcal{S}_\alpha \text{ and } \{\min E_j: 1 \leq j \leq k\} \in \mathcal{S}\}\cup \{ \emptyset\}$;
    \item $\mathcal{S}_\alpha = \{F \in [\N]^{<\omega}: F \in \mathcal{S}_{\alpha_n} \text{ for some }n \leq \min F \} \cup \{\emptyset\}$, if $\alpha$ is a limit ordinal and $(\alpha_n)_n$ is a fixed increasing sequence of ordinals converging to $\alpha$. 
\end{itemize}
\end{definition}

Note that the sequence of Schreier families $(\Sh_\alpha)_{\alpha<\omega_1}$ depends on the choice of the sequences $(\alpha_n)_n$ converging to each limit ordinal $\alpha$. It is a well-known fact \cite{at} that Schreier families are regular families, so that we may apply the results from the previous section to these families.

\begin{lemma} \label{Shreiermax}
Let $E$ and $F$ be two maximal sets in $\Sh_{\alpha}$, where $\alpha<\omega_1$. If $F$ is a spreading of $E$ then $\min E=\min F$.
\end{lemma}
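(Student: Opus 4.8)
The plan is to prove Lemma \ref{Shreiermax} by transfinite induction on $\alpha < \omega_1$. Let me think about what the statement really asks. We have two maximal sets $E, F \in \Sh_\alpha^{MAX}$ where $F$ is a spreading of $E$ (so there is an order-preserving injection $\sigma: E \to \N$ with $\sigma(n) \geq n$ and $\sigma(E) = F$). We want $\min E = \min F$. Since $F$ is a spreading, $\min F \geq \min E$ automatically, so the content is the reverse inequality $\min F \leq \min E$, i.e. spreading cannot strictly increase the minimum while preserving maximality.

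The base case $\alpha = 1$ (Schreier family $\Sh$): a maximal set in $\Sh$ is exactly a set $E$ with $|E| = \min E$ (since $|E| \leq \min E$ and you can't add anything to the right — adding an element strictly larger keeps $\min$ fixed but would need $|E|+1 \leq \min E$, contradicting maximality means $|E| = \min E$). If $F$ is a spreading of $E$ with $\min F = \min E' > \min E$ then... let me verify: $|F| = |E| = \min E$. For $F$ maximal we need $|F| = \min F$. So $\min E = |F| = \min F$, forcing $\min E = \min F$. Good, the base case is actually immediate.

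Now I need to figure out the key structural idea for the successor step. I'd want to understand the canonical decomposition of maximal sets. For $\Sh_{\alpha+1}$, a maximal set $E$ decomposes as $E = E_1 \cup \dots \cup E_k$ where each $E_j \in \Sh_\alpha$, $\{\min E_j\} \in \Sh$ is maximal (so $k = \min E_1 = \min E$), and crucially each $E_j$ should be maximal in $\Sh_\alpha$. The subtle point is that maximality of $E$ in $\Sh_{\alpha+1}$ forces both that $k = \min E_1$ and that each block $E_j$ is maximal in $\Sh_\alpha$. I expect to need a preliminary observation that the blocks of a maximal set are themselves maximal (using heredity/spreading of $\Sh_\alpha$). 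Then if $F = \sigma(E)$ with blocks $F_j = \sigma(E_j)$, each $F_j$ is a spreading of $E_j$, hence by the inductive hypothesis $\min F_j = \min E_j$. In particular $\min F = \min F_1 = \min E_1 = \min E$, which is exactly what we want. I expect the main obstacle to be verifying that the blocks $F_j$ remain maximal in $\Sh_\alpha$ and that the decomposition of $F$ lines up block-by-block with that of $E$ — i.e. that the spreading respects the block structure — rather than merging or splitting blocks in a way that shifts the minimum.

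For the limit case $\alpha = \lim \alpha_n$: here $E \in \Sh_\alpha$ is maximal with $n \leq \min E$ the relevant index so that $E \in \Sh_{\alpha_n}$, and in fact for a limit ordinal $E$ maximal in $\Sh_\alpha$ with $\min E = p$ should correspond to $E$ being maximal in $\Sh_{\alpha_p}$. If $F$ is a spreading of $E$ with $\min F = q \geq p$, then $F \in \Sh_{\alpha_p} \subseteq \Sh_{\alpha_q}$ by spreading and the nesting of the families; I would argue $F$ maximal in $\Sh_\alpha$ forces $F$ maximal in $\Sh_{\alpha_q}$, and then I want to invoke the inductive hypothesis at level $\alpha_q$ (or $\alpha_p$) to conclude $\min E = \min F$. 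The delicate point in the limit step is matching the index used to witness membership on each side ($\alpha_p$ versus $\alpha_q$) and checking that maximality is preserved under the spreading given that the threshold $n \leq \min F$ has moved; I would handle this by showing the witnessing index can be taken to equal the minimum and reducing to the already-proved case at a smaller ordinal. Throughout, I would lean on heredity, spreading, and compactness of the families (and Lemma \ref{propRegular}) to control maximality, and I anticipate the block-alignment and index-matching verifications to be the genuinely technical parts while the overall inductive skeleton is straightforward.
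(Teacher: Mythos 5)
Your overall skeleton (transfinite induction, with the base case handled by the identity $|E|=\min E$ for maximal Schreier sets) matches the paper, but your successor step takes a genuinely different route, and your limit step contains a real error. On the successor step: you propose to prove that the blocks of a maximal set in $\Sh_{\beta+1}$ are maximal in $\Sh_\beta$ and then apply the inductive hypothesis blockwise to the pairs $(E_j, F_j)$. This is workable: the block-maximality lemma you anticipate needing is true and provable from Lemma \ref{propRegular} --- if some block $E_j$ of a decomposition were not maximal in $\Sh_\beta$, extend it to the right past $\max E$ to a maximal $G\in\Sh_\beta$ with $G\setminus E_j$ beyond $\max E$; replacing $E_j$ by $G$ leaves all block minima unchanged, so $E\cup(G\setminus E_j)\in\Sh_{\beta+1}$, contradicting maximality of $E$. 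Your worry about the decompositions of $E$ and $F$ failing to ``line up'' is moot, since you may simply push forward $E$'s decomposition by the spreading map $\sigma$ and work with that decomposition of $F$. However, the paper's successor step is substantially simpler and uses \emph{no inductive hypothesis at all}: since $\{\min E_j : 1\leq j\leq k\}\in\Sh$ with $k$ distinct minima, one has $k\leq \min E_1$, and if $k<\min E_1$ one could append a singleton block beyond $\max E$, contradicting maximality; hence $\min E = k$, and the identical counting argument applied to the pushed-forward decomposition of $F$ gives $\min F = k = \min E$. Your route buys the stronger blockwise conclusion $\min E_j = \min F_j$ for all $j$, at the cost of an auxiliary lemma the statement does not need.

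The genuine gap is in your limit case: you assert $F\in\Sh_{\alpha_p}\subseteq\Sh_{\alpha_q}$ ``by the nesting of the families,'' but the generalized Schreier families are \emph{not} nested --- $\Sh_{\alpha_n}\subseteq\Sh_{\alpha_m}$ for $n\leq m$ fails in general (only a local version holds, for sets with sufficiently large minimum), and for the same reason your fallback plan of ``showing the witnessing index can be taken to equal the minimum'' also fails: membership $F\in\Sh_\alpha$ only guarantees \emph{some} $n\leq\min F$ with $F\in\Sh_{\alpha_n}$, and without nesting you cannot bump $n$ up to $\min F$. The index-matching problem you set up is in fact artificial, and the paper's move dissolves it: fix a single $n\leq\min E$ with $E\in\Sh_{\alpha_n}$; since $\Sh_{\alpha_n}$ is spreading, $F=\sigma(E)\in\Sh_{\alpha_n}$ as well, and $n\leq\min E\leq\min F$, so the \emph{same} index $n$ witnesses membership of both sets. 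One then transfers maximality from $\Sh_\alpha$ down to $\Sh_{\alpha_n}$ via Lemma \ref{propRegular} (a proper right-extension inside $\Sh_{\alpha_n}$ preserves the minimum, hence stays in $\Sh_\alpha$, contradicting maximality in $\Sh_\alpha$) and applies the inductive hypothesis at $\alpha_n$. With that repair your limit step coincides with the paper's; as written, it rests on a false inclusion.
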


\begin{proof}
We are going to prove the statement by transfinite induction. It clearly holds true for $\Sh_1$, and assuming it holds for $\Sh_{\beta}$, for all $\beta<\alpha$, we will prove it for $\Sh_{\alpha}$.

Case 1: $\alpha$ is a successor ordinal, hence $\alpha=\beta +1$ for some $\beta<\omega_1$.

Let $E=\cup_{j=1}^{k}E_j$ for some $E_j \in \mathcal{S}_\beta$, $E_j < E_{j+1}$ and $\{\min E_j: 1 \leq j \leq k\} \in \mathcal{S}$. Since $E$ is maximal, $k= \min E_1 = \min E$. Let $\sigma: E \rightarrow F$ be the order-preserving bijection and, since $F$ is a spreading of $E$, then $\sigma(n) \geq n$ for every $n \in E$. In particular, $F_j:= \sigma(E_j) \in \mathcal{S}_\beta$, as $\mathcal{S}_\beta$ is spreading, and $\{\min F_j: 1 \leq j \leq k \} \in \mathcal{S}$, as $\mathcal{S}$ is spreading. Since $F$ is maximal, we get that $\min F = \min F_1 = k = \min E$.

Case 2: $\alpha$ is a limit ordinal.

Let $n<\min E$ be such that $E \in \mathcal{S}_{\alpha_n}$. Since $F$ is a spreading of $E$, we get that $F \in \mathcal{S}_{\alpha_n}$. The maximality of $E$ and $F$ in $\mathcal{S}_\alpha$ implies, by Lemma \ref{propRegular} of $\mathcal{S}_{\alpha_n}$, that they are both also maximal in $\mathcal{S}_{\alpha_n}$. By the inductive hypothesis, we get that $\min E = \min F$. 
\end{proof}

\begin{theorem}
Let $T:X^*_{\Sh_\alpha}\to X^*_{\Sh_\alpha}$ be an isometry. Then there exist a sequence of signs $(\theta_i)_i$ such that for any $i\in\N$, $Te^*_i=\theta_i e^*_i$.
\end{theorem}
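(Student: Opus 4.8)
The plan is to invoke Theorem \ref{perm}, which already gives that any isometry $T: X^*_{\Sh_\alpha} \to X^*_{\Sh_\alpha}$ is induced by a signed permutation, so that $Te_i^* = \theta_i e^*_{\pi(i)}$ for some permutation $\pi$ of $\N$ and signs $(\theta_i)_i$. The entire content of the theorem is therefore to show that $\pi$ is the identity. Since the signs can be peeled off harmlessly at the very end, I would first reduce to studying the combinatorial action: by the Corollary, $\pi$ must satisfy $\SM_\alpha = \{\pi(F) : F \in \SM_\alpha\}$, i.e. $\pi$ carries maximal Schreier sets to maximal Schreier sets bijectively. The goal is to deduce from this invariance property, together with the special structure of the Schreier families captured in Lemma \ref{Shreiermax}, that $\pi = \mathrm{id}$.

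The main idea I would pursue is to show that $\pi$ must be order-preserving, and then that an order-preserving permutation fixing the right combinatorial data must be the identity. The key leverage is Lemma \ref{Shreiermax}: if $F$ is a spreading of a maximal set $E$ and both are maximal, then $\min E = \min F$. The plan is to use this to pin down $\pi$ on minima. First I would argue that $\pi$ preserves the order: suppose not, so there exist $i < j$ with $\pi(i) > \pi(j)$. Using Lemma \ref{propRegular} I can extend a suitable starting set to a maximal set $E$ containing $i$ and $j$ in a controlled way, and then examine $\pi(E)$, which must also be maximal; the reversal of order on the pair $\{i,j\}$ should be made to contradict maximality or the spreading structure. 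A cleaner route may be to fix attention on an element $n$ and the collection of maximal sets having $n$ as their minimum, comparing these to the maximal sets having $\pi(n)$ as an element, and using that spreadings of a maximal set preserve the minimum.

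Concretely, I expect the cleanest argument to run as follows. For each $n$, let $E$ be a maximal set with $\min E = n$ (one exists by Lemma \ref{propRegular}). Then $\pi(E) \in \SM_\alpha$. If I can show that $\pi$ is order-preserving, then $\pi(E)$ is itself the image under an order isomorphism, hence a spreading-type relationship holds, and $\min \pi(E) = \pi(\min E) = \pi(n)$; if additionally $\pi(E)$ and $E$ turn out to be spreadings of one another, Lemma \ref{Shreiermax} forces $\min \pi(E) = \min E$, giving $\pi(n) = n$. So the two sub-tasks are (a) establish that $\pi$ is order-preserving on $\N$, and (b) arrange the comparison so that Lemma \ref{Shreiermax} applies. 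I would establish (a) by a minimal-counterexample argument on pairs, and then bootstrap from fixing minima of maximal sets to fixing every natural number, since every $n$ is the minimum of some maximal set and, by shifting to sets whose minimum is just below $n$, one can trap $n$ itself.

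The hard part, I expect, will be (b): making Lemma \ref{Shreiermax} genuinely applicable, since that lemma compares a set with an honest spreading of it, whereas $\pi(E)$ is a priori only an order-isomorphic copy of $E$ (not necessarily a spreading, i.e. not necessarily satisfying $\pi(m) \geq m$ pointwise). Bridging this gap is the crux: I would either first prove that an order-preserving $\pi$ preserving $\SM_\alpha$ must in fact satisfy $\pi(m) \geq m$ for all $m$ (and symmetrically $\pi^{-1}(m) \geq m$, forcing equality), or else apply the lemma simultaneously to $\pi$ and $\pi^{-1}$ to squeeze the minima from both sides. Once $\pi = \mathrm{id}$ is in hand, the theorem follows immediately, since then $Te_i^* = \theta_i e_i^*$ as required.
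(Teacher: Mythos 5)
Your reduction is the right one (invoke Theorem \ref{perm}, then show the permutation $\pi$ is the identity, with the signs peeled off at the end), and you correctly identify Lemma \ref{Shreiermax} as the tool. But the plan as stated has a genuine gap, and moreover your division into sub-tasks (a) and (b) is miscalibrated. First, note that an order-preserving bijection of $\N$ onto $\N$ is automatically the identity (its value at $1$ must be $1$, and so on inductively), so your sub-task (a) is already equivalent to the full theorem and sub-task (b) is vacuous once (a) holds. The real problem is that your proposed route to (a) --- a ``minimal-counterexample argument on pairs'' --- does not work for general $\alpha$: for higher-order Schreier families the maximal sets are large, pairs $\{i,j\}$ are typically far from maximal, and preservation of $\SM_\alpha$ gives no usable constraint on how $\pi$ acts on a pair in isolation. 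You never supply a mechanism that extracts order information about $\pi$ from the mere fact that $\pi$ permutes $\SM_\alpha$, and you explicitly flag (bridging ``order-isomorphic copy'' to ``spreading'') but do not close the crux.

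The missing idea, which is how the paper proceeds, is to avoid proving anything global about $\pi$ and instead manufacture one specific maximal set on which $\pi$ acts as an honest spreading. Take the least $k_0$ with $p_0:=\pi(k_0)\neq k_0$; minimality plus injectivity (and the fact that $\{1\}$ is the unique maximal singleton, so $\pi(1)=1$) forces $p_0>k_0$. Since every permutation satisfies $\pi(k)\geq k$ for infinitely many $k$, one can choose $k_0<k_1<k_2<\cdots$ with $p_i:=\pi(k_i)$ strictly increasing and $p_i\geq k_i$ for all $i$, so that the map $k_i\mapsto p_i$ is a spreading of the infinite set $\{k_0,k_1,\dots\}$. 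Now the decisive tool you did not invoke is the \emph{barrier property} of $\SM_\alpha$: every infinite subset of $\N$ has an initial segment in $\SM_\alpha$. This yields $E\subseteq\{k_0,k_1,\dots\}$ with $E\in\SM_\alpha$ and $\min E=k_0$; preservation of extreme points gives $\pi(E)\in\SM_\alpha$, and by construction $\pi(E)$ is a spreading of $E$, so Lemma \ref{Shreiermax} applies directly and yields $k_0=\min E=\min\pi(E)=p_0$, a contradiction. Without the barrier property (or some substitute producing a maximal set with prescribed minimum inside your chosen sequence) and without the device of selecting a subsequence on which $\pi$ is a spreading, your outline cannot be completed as written.
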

\begin{proof}
Fix $T:X^*_{\Sh_\alpha}\to X^*_{\Sh_\alpha}$, and from Theorem \ref{perm} we have that there exists a permutation $\pi:\N\to\N$ and sequence of signs $(\theta_i)_i$ such that  $Te^*_i=\theta_i e^*_{\pi(i)}$. Assume towards a contradiction that the conclusion is not true, and let $k_0$ be the smallest integer such that $p_0:=\pi(k_0)\neq k_0$. Note that from the proof of Theorem \ref{perm} follows that $\pi$ sends maximal singletons to maximal singletons, and since $\{1\}$ is the only maximal singleton in $\Sh_{\alpha}$ we have that $k_0>1$ and $\{k_0\}\notin\SM_{\alpha}$. From the minimality of $k_0$ we also have that $p_0>k_0$.

Pick $k_1>k_0$ such that $p_1:=\pi(k_1)\geq k_1$ and $p_1>p_0$. Note that we can always do that, since any permutation will contain an increasing sequence, and we can go far enough along that sequence to pick a suitable $k_1$. Continuing in this manner we construct infinite sequences $\{k_0, k_1, k_2 \dots\}$ and
$\{p_0, p_1, p_2 \dots\}$ such that $p_i:=\pi(k_i)$ and $\{p_0, p_1, p_2 \dots\}$ is a spreading of $\{k_0, k_1, k_2 \dots\}$. From the barrier property, we can find an initial segment $E\subset\{k_0, k_1, k_2 \dots\}$ such that $E\in\SM_{\alpha}$. Since $T$ sends extreme points to extreme points, and $E\in\SM_{\alpha}$, it follows that $\pi(E)\in\SM_{\alpha}$ as well. Hence, from Lemma \ref{Shreiermax} we must have that $\min E=\min \pi(E)$. That is, $k_0=p_0$, which contradicts the initial assumption. This finishes the proof.
\end{proof}

\section{Final remarks}\label{secFinal}

Theorem \ref{perm} guarantees that all the isometries of a combinatorial space or its dual are determined by a permutation of the elements of the basis and a change of signs. Natural and general questions which remain open are the following:

\begin{question}
Given a regular family $\F$, what are the permutations of the basis which induce an isometry of $X_\F$? 
\end{question}

\begin{question}
For which combinatorial spaces can we explicitly describe the group of its isometries?
\end{question}

In Section \ref{secSchreier} we described the group of the isometries on Schreier spaces, showing that the identity is the only permutation  allowed. The following example illustrates an intermediate situation where more permutations are allowed, though not all of them:

\begin{example}\label{sequence}
Given an increasing sequence $(k_n)_n$ such that $k_0=0$, let
$$\F = \{F \in [\N]^{<\omega}: |F| \leq n, \text{ where }k_{n-1} \leq \min F <k_n\}.$$
Then $T: X_\F^* \rightarrow X_\F^*$ is an isometry if, and only if, there is a permutation $\pi$ of $\mathbb{N}$ such that for all $n\in \mathbb{N}$, $\pi(I_n)=I_n$ and $T(e_n^*)= \pm e_{\pi(n)}^*$, where $I_n=[k_n, k_{n+1}[$.
\end{example}
\begin{proof}
It is easy to see that $\F$ is hereditary and spreading, and to prove compactness one should follow similar arguments as to the Schreier families. Moreover, one easily sees that $\FM$ is a barrier.

Given an isometry $T: X_\mathcal{F}^* \rightarrow X_{\mathcal{F}}^*$, by Theorem \ref{perm}, there exists a permutation $\pi: \N \rightarrow \N$ such that $T(e_n^*)= \pm e_{\pi(n)}^*$ for every $n \in \mathbb{N}$. Note that from the proof of Theorem \ref{perm} it follows that $\pi(F) \in \FM$ iff $F \in \FM$. On the other hand, $F \in \FM$ iff $|F|=n$ for the unique $n$ such that $k_{n-1} \leq \min F < k_n$. It follows easily that $\pi(I_n)=I_n$. 

Conversely, given a permutation $\pi$ of $\N$ such that $\forall n\in \mathbb{N} \ \pi(I_n)=I_n$, we have that $\mathcal{F} = \{\pi(F): F \in\mathcal{F}\}$. Hence, if $T(e_n^*)= \pm e_{\pi(n)}^*$ for every $n \in \mathbb{N}$, one can take $T$ to be the linear operator that takes $e_n^*$ to $e_{\pi(n)}^*$ and is is easy to see that $T$ is an isometry.
\end{proof}




\end{document}